\newtheorem{theorem}{Theorem}[section]
\newtheorem{proposition}[theorem]{Proposition} 
\newtheorem{lemma}[theorem]{Lemma}
\theoremstyle{definition}
\newtheorem{definition}[theorem]{Definition}
\theoremstyle{remark}
\newtheorem{example}[theorem]{Example}
\newcommand{\cf}{\textnormal{cf}}
\newcommand{\n}{\ensuremath{\mathfrak{n}}}
\newcommand{\ran}{\mbox{ran}}
\newcommand{\surj}{\ensuremath{\preccurlyeq}}
\newcommand{\bisurj}{\ensuremath{\preccurlyeq\succcurlyeq}}
\title{Combinatorics of Reductions between Equivalence Relations}
\author{Dan Hathaway}
\author{Scott Schneider}
\address{Mathematics Department\\
University of Michigan\\
Ann Arbor, MI 48109--1043, U.S.A.}
\email{danhath@umich.edu}
\email{sms252@gmail.com}
\keywords{equivalence relation, reduction}
\subjclass[2010]{Primary 03E05; Secondary 03E10}
\begin{document}

\begin{abstract}
We discuss combinatorial conditions for the existence of various types of reductions between equivalence relations, and in particular identify necessary and sufficient conditions for the existence of injective reductions.
\end{abstract}

\maketitle

\section{Introduction}

Let $E$ and $F$ be equivalence relations on sets $X$ and $Y$, respectively.  A \emph{homomorphism} from $E$ to $F$ is a function $\phi:X\to Y$ such that for all $x,x'\in X$, $x\mathrel{E}x'$ implies $\phi(x)\mathrel{F}\phi(x')$. A homomorphism $\phi$ from $E$ to $F$ induces a map $\tilde{\phi}:X/E\to Y/F$ between the quotients defined by $\tilde{\phi}([x]_E)=[\phi(x)]_F$. We obtain special kinds of homomorphisms by requiring $\phi$ or $\tilde{\phi}$ to have certain properties such as being one-to-one or onto. For instance if $\tilde{\phi}$ is one-to-one, then $\phi$ is called a \emph{reduction}.  In this note we study the combinatorics of reductions between equivalence relations, and attempt to identify necessary and sufficient conditions for the existence of reductions of various natural types.  We will see that certain types admit simple combinatorial characterizations while others do not.  Our main results are a necessary and sufficient condition for the existence of an injective reduction from $E$ to $F$ and a complete diagram of implications between the various types of reducibility that we consider. We work in the purely set-theoretic context without making any definability assumptions on equivalence relations or reductions.

Many of the combinatorial problems we consider may be viewed as special instances of the general matching problem addressed in \cite{ANS}. However, it is not easy to apply the abstract framework of \cite{ANS} to our context, and we give a comparatively simple proof of Theorem \ref{thm:main} below.

\section{Reductions of Equivalence Relations}

We now define the various types of homomorphisms that we will consider. Let $E$ and $F$ be equivalence relations on sets $X$ and $Y$, respectively, let $\phi:X\to Y$ be a homomorphism from $E$ to $F$, and let $\tilde{\phi}$ be the induced map on classes. We consider the following properties of the maps $\phi$ and $\tilde{\phi}$:
\begin{enumerate}
 \item[(i)] $\phi$ is one-to-one;
 \item[(ii)] $\phi$ is onto;
 \item[(iii)] $\tilde{\phi}$ is one-to-one;
 \item[(iv)] $\tilde{\phi}$ is onto;
 \item[(v)] $\ran(\phi)$ is \emph{$F$-invariant}; i.e., if $y\in\ran(\phi)$ and $y\mathrel{F}y'$ then $y'\in\ran(\phi)$.
\end{enumerate}
It is straightforward to check that the only implications holding between these properties are those following from the fact that $\phi$ is onto if and only if $\tilde{\phi}$ is onto and $\ran(\phi)$ is $F$-invariant. It follows that there are 16 distinct Boolean combinations of these properties. Since we will always take $\phi$ to be a reduction (i.e., we assume (iii) holds), this reduces the number of distinct combinations to 8.  We now introduce terminology and notation for these 8 types of reductions.

\begin{definition} Let $E$, $F$, $\phi$, and $\tilde{\phi}$ be as above.
\begin{enumerate}
 \item $\phi$ is a \emph{reduction} if (iii) holds;
 \item $\phi$ is an \emph{embedding} if (i) and (iii) hold;
 \item $\phi$ is a \emph{surjective reduction} if (ii) -- (v) hold;
 \item $\phi$ is an \emph{isomorphism} if (i) -- (v) hold
 \item $\phi$ is an \emph{invariant reduction} if (iii) and (v) hold;
 \item $\phi$ is a \emph{full reduction} if (iii) and (iv) hold;
 \item $\phi$ is an \emph{invariant embedding} if (i), (iii), and (v) hold;
 \item $\phi$ is a \emph{full embedding} if (i), (iii), and (iv) hold.
\end{enumerate}
\end{definition}

\begin{definition}
If $E$, $F$ are equivalence relations on sets $X$, $Y$, we say that $E$ is \emph{reducible} to $F$ and write $E\leq F$ if there is a reduction from $E$ to $F$, and we say that $E$ and $F$ are \emph{bireducible} and write $E\sim F$ if $E\leq F$ and $F\leq E$. We introduce analogous terminology and notation for the other types of reductions as follows:
\[
\hspace{-52mm}
\arraycolsep=8mm
\begin{array}{lcc}
\text{(1) \ \emph{reducible}} & \leq & \sim \\
\text{(2) \ \emph{embeddable}} & \sqsubseteq & \approx \\
\text{(3) \ \emph{surjectively reducible}} & \surj & \bisurj \\
\text{(4) \ \emph{isomorphic}} & \cong & \cong \\
\text{(5) \ \emph{invariantly reducible}} & \leq^i & \sim^i \\
\text{(6) \ \emph{fully reducible}} & \leq^f & \sim^f \\
\text{(7) \ \emph{invariantly embeddable}} & \sqsubseteq^i & \approx^i \\
\text{(8) \ \emph{fully embeddable}} & \sqsubseteq^f & \approx^f
\end{array}
\]
\end{definition}


We display all the direct implications between these relations in Figures \ref{FIG1} and \ref{FIG2}, and we include a proof of Proposition \ref{prop:complete} at the end of the paper.



\begin{proposition}\label{prop:complete} The diagrams in Figures \ref{FIG1} and \ref{FIG2} are complete; that is, in each diagram,  for every pair of nodes $A$ and $B$, the implication $A\Rightarrow B$ holds if and only if it is implied by the arrows in the diagram. 
\end{proposition}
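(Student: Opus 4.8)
The plan is the usual one for such a completeness statement. Every arrow actually drawn in Figure~\ref{FIG1} or Figure~\ref{FIG2} is valid and immediate from the definitions: in Figure~\ref{FIG1} because the list of properties defining the stronger relation contains that defining the weaker one (using the recorded equivalence of (ii) with the conjunction of (iv) and (v) to express surjectivity), and in Figure~\ref{FIG2} because the corresponding one--sided arrows then hold in both directions. The real content is therefore the absence of all \emph{other} arrows: for each ordered pair $(A,B)$ of nodes with $A\Rightarrow B$ not forced, I must produce equivalence relations $E$ and $F$ with $E\mathrel{A}F$ but $E\not\mathrel{B}F$. The common tool is that each of our relations is determined by the \emph{class--size spectra} of $E$ and $F$, i.e.\ by the cardinals $a_\kappa,b_\kappa$ counting the classes of each size $\kappa$: a reduction is an assignment of $E$--classes to $F$--classes, and properties (i), (iv), (v) translate into Hall--type conditions on such an assignment, whose decisive (embedding) case is Theorem~\ref{thm:main}.

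For Figure~\ref{FIG1} I would first note that, since (iii) is always assumed, the eight one--sided relations are indexed by the subsets $S\subseteq\{(\mathrm i),(\mathrm{iv}),(\mathrm v)\}$ of the remaining optional properties, and $A\Rightarrow B$ is drawn exactly when $S_A\supseteq S_B$; thus the diagram is the Boolean cube on three generators. The key simplification is that it suffices to refute, for each single property $p\in\{(\mathrm i),(\mathrm{iv}),(\mathrm v)\}$, the one implication ``(maximal type omitting $p$)\,$\Rightarrow$\,(minimal type demanding $p$)''. Indeed, given any undrawn $(A,B)$, choose $p\in S_B\setminus S_A$; a single pair witnessing $E\mathrel{T}F$ and $E\not\mathrel{T'}F$, where $S_T=\{(\mathrm i),(\mathrm{iv}),(\mathrm v)\}\setminus\{p\}$ and $S_{T'}=\{p\}$, then also witnesses $A$ (because $T\Rightarrow A$) and the failure of $B$ (because $B\Rightarrow T'$). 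Three tiny finite gadgets do the job: for $p=(\mathrm i)$ take $E$ with a single class of size $2$ and $F$ with a single class of size $1$, giving $E\surj F$ but $E\not\sqsubseteq F$; for $p=(\mathrm{iv})$ take $E$ a single point and $F$ two inequivalent points, giving $E\sqsubseteq^i F$ but $E\not\leq^f F$; and for $p=(\mathrm v)$ take $E$ a single point and $F$ a single class of size $2$, giving $E\sqsubseteq^f F$ but $E\not\leq^i F$. Each is checked directly from the definitions.

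Figure~\ref{FIG2} is the more delicate half, because passing to the symmetric (bireducibility) versions forces several relations to coincide and so changes the shape of the diagram: for instance $\sim$ and $\sim^f$ both reduce to $|X/E|=|Y/F|$, while $\approx^i$ coincides with $\cong$ and $\bisurj$ with $\approx^f$ (since $E\surj F$ is equivalent to $F\sqsubseteq^f E$). After pinning down exactly which relations survive and which arrows connect them, I would again refute each undrawn implication by an example; the new difficulty is that over \emph{finite} spectra all the surviving bi--relations collapse to $\cong$ or $\sim$, so the separating examples must be genuinely infinite. Here one exploits the failure of cancellation in cardinal arithmetic: e.g.\ to separate $\approx$ from $\cong$, let $E$ consist of $\aleph_0$ classes of size $\aleph_1$ together with one class of size $\aleph_0$, and $F$ of $\aleph_0$ classes of size $\aleph_1$ together with two classes of size $\aleph_0$. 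The tail counts $\sum_{\lambda\ge\kappa}a_\lambda$ and $\sum_{\lambda\ge\kappa}b_\lambda$ then agree for every $\kappa$, so embeddings exist in both directions and $E\approx F$, yet $a_{\aleph_0}=1\neq 2=b_{\aleph_0}$, whence $E\not\cong F$.

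The main obstacle is concentrated in Figure~\ref{FIG2}: correctly determining the collapsed poset of bireducibility relations and then, for each of its undrawn arrows, producing an infinite pair of spectra that can be \emph{verified}, which means proving the positive half by exhibiting explicit class assignments and the negative half by ruling out \emph{all} assignments of the forbidden type. These non--existence arguments are exactly where the Hall--type characterization underlying Theorem~\ref{thm:main}, together with its reversed and bijective analogues, is needed; by contrast the Figure~\ref{FIG1} gadgets, being finite, are dispatched by inspection.
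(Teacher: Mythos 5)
Your treatment of Figure~\ref{FIG1} rests on a mischaracterization of the diagram. Figure~\ref{FIG1} is not the Boolean cube on the three generators $(\mathrm i),(\mathrm{iv}),(\mathrm v)$: it has fifteen nodes, comprising the seven relations from $E$ to $F$, the seven reversed relations from $F$ to $E$, and $E\cong F$, and its drawn arrows include the cross-direction equivalences $E\leq^f F\Leftrightarrow F\leq^f E$ and $E\preccurlyeq F\Leftrightarrow F\sqsubseteq^f E$ (the latter is Theorem~\ref{thm:main}(8)). Consequently your reduction ``choose $p\in S_B\setminus S_A$'' only makes sense when $A$ and $B$ point in the same direction, and your three gadgets are only ever \emph{invoked} for such pairs. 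The completeness claim, however, is mostly about mixed pairs: one must show, for example, that $E\sqsubseteq^i F\not\Rightarrow F\leq E$, $E\sqsubseteq F\not\Rightarrow F\leq E$, $E\leq^f F\not\Rightarrow F\sqsubseteq E$, and $E\leq^f F\not\Rightarrow F\leq^i E$, while at the same time \emph{not} refuting the mixed implications that the cross arrows do force (e.g.\ $E\preccurlyeq F\Rightarrow F\leq E$). The paper's seven counterexamples are organized around exactly these mixed failures. As it happens, your three gadgets do witness all the mixed failures as well (e.g.\ your second gadget gives $E\sqsubseteq^i F$ with $F\not\leq E$, and your first gives $E\leq^f F$ with $F\not\leq^i E$), but nothing in your argument asserts or verifies this; to repair the proof you would have to first compute the transitive closure of the fifteen-node diagram, using the two displayed equivalences, and then check each unforced mixed implication against one of your examples. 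As written, what you prove complete is an eight-node diagram that is not Figure~\ref{FIG1}.

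The Figure~\ref{FIG2} part is a plan rather than a proof. The structural collapses you identify ($\approx^i$ with $\cong$, $\bisurj$ with $\approx^f$, $\sim$ with $\sim^f$) are correct, but the resulting diagram still requires roughly five separate refutations, of which you supply one: the pair separating $\approx$ from $\cong$ (in fact it also separates $\approx$ from $\sim^i$). Your example does \emph{not} settle $E\bisurj F\not\Rightarrow E\cong F$, because in it $F\not\preccurlyeq E$: a full embedding of $E$ into $F$ would have to send some class of size $\aleph_1$ onto/into one of $F$'s classes of size $\aleph_0$, which is impossible; the paper instead uses the even/odd-size construction, where each class of size $n$ embeds into the class of size $n+1$ on the other side. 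Still missing are $E\sim^i F\not\Rightarrow E\approx F$ and the two refutations from $E\sim F$; note also that your remark that the separating examples ``must be genuinely infinite'' is wrong for those last two, which the paper witnesses with the finite pair consisting of one class of size $1$ versus one class of size $2$.
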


\noindent Note, however, that certain implications involving more than two relations may not be displayed in the figures; for instance, the fact that $E\leq F\wedge F\leq E\Rightarrow E\leq^fF$ is not displayed in Figure \ref{FIG1}.

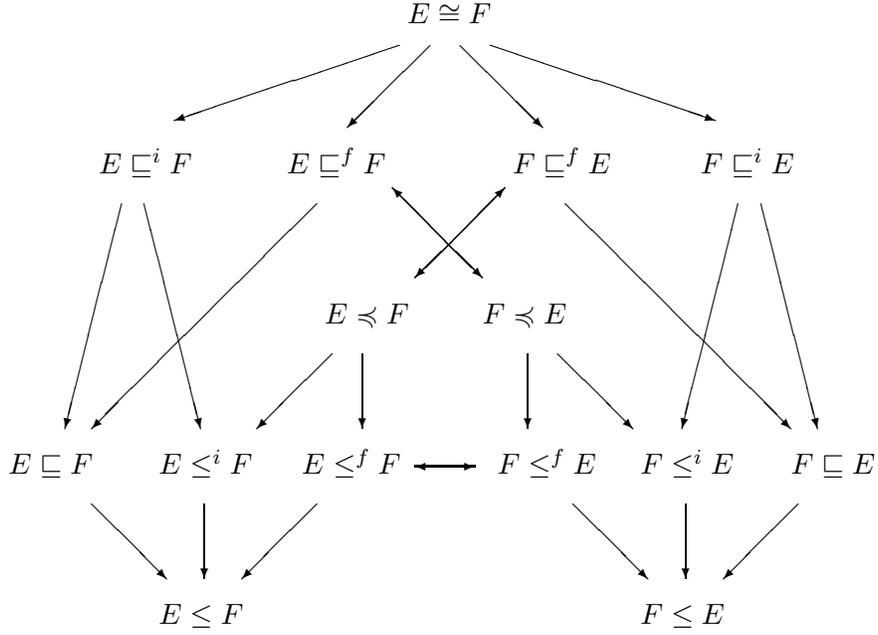
\begin{figure}
\setlength{\unitlength}{1mm}
\centering
\begin{picture}(12,90)

\put(1,80){$E\cong F$}

\put(-40,60){$E\sqsubseteq^i F$}
\put(-15,60){$E\sqsubseteq^f F$}
\put(15,60){$F\sqsubseteq^f E$}
\put(40,60){$F\sqsubseteq^i E$}

\put(-10,40){$E\preccurlyeq F$}
\put(11,40){$F\preccurlyeq E$}

\put(-52,20){$E\sqsubseteq F$}
\put(-32,20){$E\leq^i F$}
\put(-13,20){$E\leq^fF$}
\put(13,20){$F\leq^fE$}
\put(32,20){$F\leq^i E$}
\put(52,20){$F\sqsubseteq E$}

\put(-32,0){$E\leq F$}
\put(32,0){$F\leq E$}

\put(0,77){\vector(-3,-1){30}}
\put(4,77){\vector(-1,-1){11}}
\put(8,77){\vector(1,-1){11}}
\put(12,77){\vector(3,-1){30}}

\put(-1,58){\vector(1,-1){12}}
\put(11,46){\vector(-1,1){12}}
\put(14,58){\vector(-1,-1){12}}
\put(2,46){\vector(1,1){12}}

\put(-37,56){\vector(-1,-4){7.5}}
\put(-34,56){\vector(1,-4){7.5}}
\put(-11,56){\vector(-1,-1){30}}
\put(22,56){\vector(1,-1){30}}
\put(45,56){\vector(-1,-4){7.5}}
\put(48,56){\vector(1,-4){7.5}}

\put(-9,36){\vector(-1,-1){10}}
\put(-5,36){\vector(0,-1){10}}
\put(17,36){\vector(0,-1){10}}
\put(21,36){\vector(1,-1){10}}

\put(2,21){\vector(1,0){8}}
\put(10,21){\vector(-1,0){8}}

\put(-41,16){\vector(1,-1){10}}
\put(-26,16){\vector(0,-1){10}}
\put(-11,16){\vector(-1,-1){10}}
\put(23,16){\vector(1,-1){10}}
\put(38,16){\vector(0,-1){10}}
\put(53,16){\vector(-1,-1){10}}

\end{picture}
\caption{Implications between types of reducibility} \label{FIG1}
\end{figure}

\begin{figure}
\setlength{\unitlength}{1mm}
\centering
\begin{picture}(0,70)

\put(-5,61){\vector(1,0){10}}
\put(5,61){\vector(-1,0){10}}

\put(-15,57){\vector(0,-1){10}}
\put(15,57){\vector(0,-1){10}}

\put(-5,41){\vector(1,0){10}}
\put(5,41){\vector(-1,0){10}}

\put(-15,37){\vector(0,-1){10}}
\put(15,37){\vector(0,-1){10}}

\put(-15,17){\vector(0,-1){10}}
\put(15,17){\vector(0,-1){10}}

\put(-5,1){\vector(1,0){10}}
\put(5,1){\vector(-1,0){10}}

\put(-20,60){$E\cong F$}
\put(10,60){$E\approx^i F$}

\put(-23,40){$E \preccurlyeq\succcurlyeq F$}
\put(10,40){$E\approx^f F$}

\put(-20,20){$E\approx F$}
\put(10,20){$E\sim^i F$}

\put(-22,0){$E\sim^fF$}
\put(10,0){$E\sim F$}

\end{picture}
\caption{Implications between equivalences on the class of equivalence relations} \label{FIG2}
\end{figure}
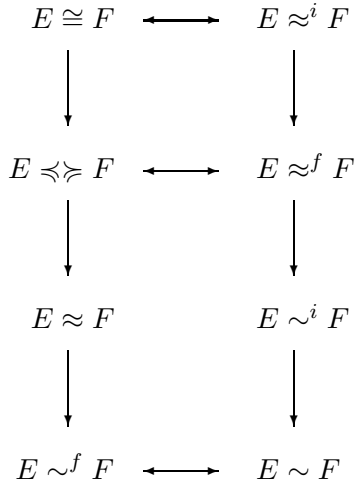

\section{The Main Theorem}

Now we consider the problem of finding necessary and sufficient combinatorial conditions for the existence of reductions of the various types between equivalence relations.

\begin{definition}
Given an equivalence relation $E$ and a (possibly finite) cardinal $\kappa$, let $\n_\kappa(E)$ be the number of $E$-classes of cardinality $\kappa$. Similarly, let $\n_{\geq\kappa}(E)$ be the number of $E$-classes of size at least $\kappa$ and $\n_{\leq\kappa}(E)$ the number of $E$-classes of size at most $\kappa$.
\end{definition}

\begin{theorem} Let $E$ and $F$ be equivalence relations on sets $X$ and $Y$, respectively. Then
\begin{enumerate}
 \item $E\leq F \ \Longleftrightarrow \ |X/E|\leq |Y/F|$;
 \item $E\sqsubseteq F \ \Longleftrightarrow \ (\forall\kappa) \, \n_{\geq\kappa}(E)\leq\n_{\geq\kappa}(F)$;
 \item $E\surj F \ \Longrightarrow \ (\forall\kappa)\, [\,\n_{\leq\kappa}(E)\leq\n_{\leq\kappa}(F) \; \wedge \; \n_{\geq\kappa}(E)\geq\n_{\geq\kappa}(F)\,]$;
 \item $E\cong F \ \Longleftrightarrow \ (\forall\kappa)\, \n_{\kappa}(E)=\n_{\kappa}(F)$;
 \item $E\leq^i F \ \Longrightarrow \ (\forall\kappa)\, \n_{\leq\kappa}(E)\leq\n_{\leq\kappa}(F)$;
 \item $E\leq^f F \ \Longleftrightarrow \ |X/E|=|Y/F|$;
 \item $E\sqsubseteq^i F \ \Longleftrightarrow \ (\forall\kappa)\, \n_{\kappa}(E)\leq\n_{\kappa}(F)$;
 \item $E\sqsubseteq^f F \ \Longleftrightarrow \ F\surj E$.
\end{enumerate}
\label{thm:main}
\end{theorem}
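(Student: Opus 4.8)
The plan is to split the eight items into those with a direct characterization and the one item whose existence direction carries all the real content, namely (2). The organizing principle is that a reduction is exactly two pieces of data: the induced map $\tilde\phi$ on classes, and for each $E$-class $C$ the restriction $\phi\restriction C$ into the $F$-class $\tilde\phi(C)$; conversely, given an injection or bijection $g\colon X/E\to Y/F$ together with a choice, for each $C$, of a map $C\to g(C)$ with prescribed properties, one reassembles a homomorphism $\phi$ with $\tilde\phi=g$. I would dispose of the direct equivalences first. Since a reduction only constrains $\tilde\phi$, item (1) (resp.\ (6)) holds iff there is an injection (resp.\ bijection) $X/E\to Y/F$, witnessed by a $\phi$ that is constant on each class. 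For (4), an isomorphism is precisely a bijection $\phi\colon X\to Y$ restricting to a bijection $C\to\tilde\phi(C)$, so it exists iff the classes admit a size-preserving bijection, i.e.\ iff $\n_\kappa(E)=\n_\kappa(F)$ for all $\kappa$. The forward directions of (2), (3), (5), (7) fall out of the same analysis; e.g.\ an embedding makes $\tilde\phi$ injective with $|C|\leq|\tilde\phi(C)|$ (as $\phi\restriction C$ is injective), so $\n_{\geq\kappa}(E)\leq\n_{\geq\kappa}(F)$.

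The structural input behind (3), (5), (7) is a lemma: \emph{if $\tilde\phi$ is injective and $\ran(\phi)$ is $F$-invariant, then $\phi$ maps each $E$-class $C$ onto the whole $F$-class $\tilde\phi(C)$.} Indeed $\ran(\phi)\cap\tilde\phi(C)=\phi[C]$ by injectivity of $\tilde\phi$, while $F$-invariance of the range forces $\tilde\phi(C)\subseteq\ran(\phi)$, so $\phi[C]=\tilde\phi(C)$. Consequently an invariant reduction yields an injection $\tilde\phi$ with $|C|\geq|\tilde\phi(C)|$, whence $\n_{\leq\kappa}(E)\leq\n_{\leq\kappa}(F)$, giving (5); a surjective reduction has in addition $\tilde\phi$ onto, so the resulting bijection simultaneously delivers $\n_{\leq\kappa}(E)\leq\n_{\leq\kappa}(F)$ and $\n_{\geq\kappa}(E)\geq\n_{\geq\kappa}(F)$, giving (3); and an invariant embedding forces $\phi\restriction C$ to be a bijection $C\to\tilde\phi(C)$, so $|C|=|\tilde\phi(C)|$ and $\n_\kappa(E)\leq\n_\kappa(F)$, whose converse is the trivial size-by-size matching, giving (7).

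The heart of the theorem is the remaining direction of (2): from $\n_{\geq\kappa}(E)\leq\n_{\geq\kappa}(F)$ for all $\kappa$, produce an injection $g\colon X/E\to Y/F$ with $|C|\leq|g(C)|$, after which an embedding assembles from injections $C\hookrightarrow g(C)$. I would frame this as a matching lemma in the bipartite ``adequacy'' graph with an edge $C$--$D$ iff $|C|\leq|D|$. Its decisive feature is that the neighborhoods $\{D:|C|\leq|D|\}$ are \emph{nested}, since cardinals are comparable; writing $A_{\geq\kappa}=\{C:|C|\geq\kappa\}$ with joint neighborhood $B_{\geq\kappa}=\{D:|D|\geq\kappa\}$, the hypothesis is exactly Hall's condition on these critical sets. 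In the finite case the greedy ``match the largest demand to the largest adequate supply'' succeeds; in general the nestedness lets me run a transfinite greedy that matches each $E$-class to an adequate $F$-class of least possible size, maintaining the invariant that all tail inequalities persist on the unmatched remainder. I expect the main obstacle to lie precisely here: making the greedy rigorous across infinitely many distinct class-sizes with no largest element, and controlling the cardinal arithmetic in the invariant, where naive ``subtract one from both sides'' is unavailable and the minimality of the chosen supply must be used to show no tail inequality is broken.

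Finally, (8) follows by recognizing both sides as the same condition. A full embedding witnessing $E\sqsubseteq^f F$ exists iff there is a bijection $g\colon X/E\to Y/F$ with $|C|\leq|g(C)|$ (assemble from injections $C\hookrightarrow g(C)$; conversely take $g=\tilde\phi$). A surjective reduction witnessing $F\surj E$ exists, by the structure lemma applied to maps $Y\to X$, iff there is a bijection $h\colon Y/F\to X/E$ with $|D|\geq|h(D)|$ (assemble from surjections $D\to h(D)$, which exist exactly when $|D|\geq|h(D)|$). Setting $g=h^{-1}$ shows the two bijection conditions coincide, yielding $E\sqsubseteq^f F\Leftrightarrow F\surj E$.
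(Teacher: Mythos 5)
Your handling of the easy items is fine: the observation that a homomorphism decomposes into the induced map on classes plus per-class restrictions, and the structure lemma (injective $\tilde\phi$ plus $F$-invariant range forces $\phi$ to map each $E$-class \emph{onto} its image class) cleanly yield (1), (3), (4), (5), (6), (7), and (8), in essentially the same way the paper treats these parts. The problem is the backward direction of (2), which is the paper's main result: your proposal does not prove it. You reduce it to a matching problem, observe that the hypothesis is Hall's condition on the critical sets, and then describe a ``transfinite greedy'' that matches each $E$-class to an adequate $F$-class of least possible size while ``maintaining the invariant that all tail inequalities persist on the unmatched remainder'' --- and you yourself flag that making this rigorous is the main obstacle. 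That obstacle is real and, as stated, fatal. Take $E$ with $\aleph_0$ classes of size $1$ and $\aleph_0$ classes of size $2$, and $F$ with $\aleph_0$ classes of size $2$ and nothing else; the hypothesis $(\forall\kappa)\,\n_{\geq\kappa}(E)\leq\n_{\geq\kappa}(F)$ holds and an embedding exists. But if the greedy processes the size-$1$ classes of $E$ first, ``least adequate size'' selects size-$2$ classes of $F$ (there is no other choice), and after $\omega$ steps the entire supply can be exhausted, leaving the size-$2$ classes of $E$ unmatchable. Note that your invariant holds at every finite stage and fails only at the limit: the tail inequalities are \emph{not} limit-stable, so local (step-by-step) maintenance of the invariant is not enough, and minimality of the chosen supply gives no protection here since all supplies have the same size. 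This is also why one cannot simply cite infinite matching theory: Hall's condition is insufficient for matchings in infinite bipartite graphs, and the paper explicitly remarks that the general criterion of Aharoni--Nash-Williams--Shelah is hard to apply in this context.

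What is missing is a global coordination mechanism, and this is precisely the content of the paper's proof. The paper enumerates the $E$-classes $\langle C_\xi : \xi<\alpha\rangle$ and $F$-classes $\langle D_\xi : \xi<\beta\rangle$ in nondecreasing order of size, fixes the \emph{largest} ordinal $\gamma\leq\beta$ that can be partitioned into $|\alpha|$ many cofinal subsets (such a largest $\gamma$ exists because the class of ordinals so partitionable is closed --- this is the paper's Lemma \ref{lem}), and assigns to each sufficiently small $E$-class $C_\nu$ its own cofinal piece $P_\nu\subseteq\gamma$ of such a partition. Cofinality of $P_\nu$ guarantees it contains an index $\xi$ with $|C_\nu|\leq|D_\xi|$, and disjointness of the pieces makes the assignment injective for free --- this is what replaces your exhaustible greedy. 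The finitely-many-in-cardinality leftover classes (those $C_\nu$ larger than every $D_\xi$, $\xi<\gamma$) are handled by induction on the number of $E$-classes, using the maximality of $\gamma$ to show $|\beta-\gamma|<|\alpha|$ and hence that fewer than $|\alpha|$ classes remain on each side. Your proposal contains no substitute for this cofinal-partition-plus-induction argument, so the heart of the theorem is left unproven.
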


The bi-implications (1), (4), (6), and (7) are trivial to prove, as are the forward implications in (2), (3), and (5). The backward direction of (2) appears to be somewhat harder, and is our main result.  Additionally we will show that the necessary conditions given in (3) and (5) are not sufficient, and we argue that there are no simple combinatorial conditions characterizing the surjective or invariant reducibility of $E$ to $F$.

Now we present our proof of (2), which will make use of the following lemma.

\begin{lemma}\label{lem}
Let $\kappa$ be an infinite cardinal, and $A$ the class of ordinals that can be partitioned into $\kappa$ many cofinal subsets. Then $A$ is closed.
\end{lemma}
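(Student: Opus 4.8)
The plan is to prove closedness straight from the definition: a class $A$ of ordinals is closed exactly when it contains all of its limit points, so it suffices to fix a limit ordinal $\gamma$ with $A\cap\gamma$ cofinal in $\gamma$ and to produce a partition of $\gamma$ into $\kappa$ cofinal subsets. Rather than first deriving an explicit characterization of $A$ (which would amount to showing that $\alpha\in A$ iff $\alpha$ is a limit ordinal with $\kappa\le|\alpha|$), I would glue together the partitions that already witness membership of the elements of $A$ lying below $\gamma$.

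Concretely, using that $A\cap\gamma$ is cofinal in $\gamma$, first choose a strictly increasing sequence $\langle\beta_i:i<\lambda\rangle$ drawn from $A\cap\gamma$ and cofinal in $\gamma$, where $\lambda=\cf(\gamma)$. Since each $\beta_{i+1}$ lies in $A$, fix a partition $\langle P^i_c:c<\kappa\rangle$ of $\beta_{i+1}$ into $\kappa$ subsets, each cofinal in $\beta_{i+1}$. The key observation is that cofinality survives restriction to the top interval: since $P^i_c$ is unbounded in $\beta_{i+1}$, the set $Q^i_c:=P^i_c\cap[\beta_i,\beta_{i+1})$ is still cofinal in $\beta_{i+1}$ (given $\eta<\beta_{i+1}$, pick $p\in P^i_c$ with $p\ge\max(\eta,\beta_i)$; then $p\in Q^i_c$). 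For each $c<\kappa$ set $S_c:=\bigcup_{i<\lambda}Q^i_c$, folding the leftover bottom segment $[0,\beta_0)$ into $S_0$.

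It then remains to verify that $\{S_c:c<\kappa\}$ is a partition of $\gamma$ into $\kappa$ cofinal sets. Disjointness and exhaustiveness are immediate, since for fixed $i$ the $Q^i_c$ partition $[\beta_i,\beta_{i+1})$, these intervals together with $[0,\beta_0)$ tile $\gamma$, and each $Q^i_c$ is nonempty; and each $S_c$ is cofinal in $\gamma$ because it contains $Q^i_c$ with $\sup Q^i_c=\beta_{i+1}$ for every $i$, while $\sup_{i<\lambda}\beta_{i+1}=\gamma$. I expect the main obstacle to be precisely the point handled by the restriction trick: the order types of the consecutive intervals $[\beta_i,\beta_{i+1})$ are completely uncontrolled (they need be neither limit ordinals nor of size $\ge\kappa$), so one cannot partition the intervals directly. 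The way around this is to restrict the partitions of the right endpoints $\beta_{i+1}\in A$, which are available by hypothesis, and to use that intersecting a cofinal set with a final interval preserves cofinality. As a byproduct this argument also delivers the sufficiency half of the characterization of $A$, though closedness itself does not require it.
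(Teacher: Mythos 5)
Your construction is the same gluing argument as the paper's, but it has one genuine gap: you never require the sequence $\langle\beta_i:i<\lambda\rangle$ to be \emph{continuous}, and without continuity your tiling claim is false. Suppose $\lambda=\cf(\gamma)>\omega$ and $i<\lambda$ is a limit index at which the sequence jumps, i.e.\ $\sup_{j<i}\beta_j<\beta_i$. Then no ordinal $\xi$ with $\sup_{j<i}\beta_j\le\xi<\beta_i$ is covered: for $j<i$ we have $j+1<i$, hence $\beta_{j+1}\le\sup_{k<i}\beta_k\le\xi$, so $\xi\notin[\beta_j,\beta_{j+1})$, while for $j\ge i$ we have $\beta_j\ge\beta_i>\xi$; and $\xi\ge\beta_0$. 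So $\{S_c:c<\kappa\}$ partitions only a cofinal subset of $\gamma$, not $\gamma$ itself, and exhaustiveness---which you dismiss as immediate---fails. This is not a fussy technicality: taking $\kappa=\omega$ (so that every limit ordinal lies in $A$) and $\gamma=\omega_1$, nothing in your proof prevents choosing $\beta_i=(\sup_{j<i}\beta_j)+\omega$ at every limit $i$, which leaves uncountably many gaps uncovered. Your argument is complete only when $\cf(\gamma)=\omega$, where no limit indices occur.

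The missing idea is exactly the clause your version drops from the paper's proof: the sequence must be chosen continuous, with $\beta_i=\sup_{j<i}\beta_j$ at limit $i$; then for any $\xi<\gamma$ the least $i$ with $\xi<\beta_i$ is $0$ or a successor, and the tiling holds. One caveat in implementing this: you cannot simply demand a continuous cofinal sequence all of whose terms lie in $A$, since closure of $A$ under such suprema is precisely what is being proved. But, like the paper, you only ever use membership in $A$ of the successor-indexed terms $\beta_{i+1}$ (to get the partitions $\langle P^i_c:c<\kappa\rangle$), so it suffices to build the sequence recursively: pick $\beta_{i+1}\in A\cap\gamma$ above $\beta_i$ and above the $i$-th term of a fixed cofinal sequence in $\gamma$, and take suprema at limits, with no requirement that those suprema lie in $A$. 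With that repair, the rest of your write-up is correct---the restriction trick $Q^i_c:=P^i_c\cap[\beta_i,\beta_{i+1})$, disjointness, cofinality of each $S_c$, and folding $[0,\beta_0)$ into $S_0$, a bottom segment the paper's own proof silently ignores.
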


\begin{proof}
Let $\gamma$ be a limit point of $A$, and let $\langle \gamma_\alpha : \alpha < \cf( \gamma ) \rangle$ be a continuous increasing sequence of elements of $A$ with limit $\gamma$. For each $\alpha < \cf(\gamma)$, let $\{ P_\nu^\alpha : \nu < \kappa \}$ be a partition of $\gamma_\alpha$ into $\kappa$ many cofinal subsets. For each $\nu < \kappa$, define
\[
P_\nu := \bigcup_{\alpha < \cf(\gamma)} (P_\nu^{\alpha+1} - \gamma_\alpha).
\]
The set $\{ P_\nu : \nu < \kappa \}$ is a partition of $\gamma$ into $\kappa$ many cofinal subsets.
\end{proof}

Note that for an ordinal $\gamma$ and infinite cardinal $\kappa$, $\gamma$ may be partitioned into $\kappa$ many cofinal subsets iff $\gamma=\kappa\cdot\alpha$ for some ordinal $\alpha$.

\begin{proof}[Proof of Theorem \ref{thm:main} (2)]
The forward direction is clear. For the backward direction, we must show that there exists an injective function $\phi : X \to Y$ such that
\[
(\forall x, x' \in X)\; x \ E \ x' \Leftrightarrow \phi(x) \ F \ \phi(x')
\]
under the assumption that
\[
(\forall \kappa)\;\n_{\ge \kappa}(E) \, \le \, \n_{\ge \kappa}(F).
\]
Let us begin by fixing an enumeration $\langle C_\xi : \xi < \alpha \rangle$ of the $E$-classes such that for all $\xi < \eta < \alpha$, $|C_\xi| \le |C_\eta|$, as well as an enumeration $\langle D_\xi : \xi < \beta \rangle$ of the $F$-classes such that for all $\xi < \eta < \beta$, $|D_\xi| \le |D_\eta|$. Notice that since $\n_{\ge 1}(E) \le \n_{\ge 1}(F)$, we have $|\alpha| \leq |\beta|$.

It is not difficult to see that there exists an appropriate injection as long as $|\alpha|$ is finite, 
so for the remainder of the proof we assume $|\alpha|$ is infinite. Indeed, as an inductive hypothesis, assume we have proven the theorem for every pair of equivalence relations $(E', F')$ satisfying
\[
(\forall \kappa)\; \n_{\ge \kappa}(E') \, \le \, \n_{\ge \kappa}(F')
\]
such that the number of $E'$-classes is $< |\alpha|$.

Since $|\alpha| \le |\beta| \le \beta$, there is at least one ordinal $\gamma \le \beta$ that can be partitioned into $|\alpha|$ many cofinal subsets. By Lemma \ref{lem}, there is a largest such $\gamma \le \beta$, which we fix. We first claim that $|\beta - \gamma| < |\alpha|$. If not, let $\delta$ be the least ordinal such that $\gamma+\delta=\beta$, so that $|\delta|=|\beta-\gamma|$. Then 
\[
\gamma+|\alpha| \ \leq \ \gamma+|\beta-\gamma| \ = \ \gamma+|\delta| \ \leq \ \gamma+\delta \ = \ \beta,
\]
contradicting the choice of $\gamma$.


Let $\sigma < \alpha$ be the least ordinal such that $(\forall \xi < \gamma)\,|C_\sigma| > |D_\xi|$ if such an ordinal exists, and let $\sigma = \alpha$ otherwise.
Hence, for each $\nu < \sigma$ there is some $\xi' < \gamma$ such that $|C_\nu| \le |D_{\xi'}|$. Let $\{ P_\nu : \nu < \sigma \}$ be a partition of $\gamma$ into cofinal subsets (such a partition exists because $\gamma$ can be partitioned into $|\alpha|$ many cofinal subsets and $\sigma \le \alpha$).
Given any $\nu < \sigma$, we may pick a $\xi' < \gamma$ such that $|C_\nu| \le |D_{\xi'}|$, and then we may pick a $\xi \in P_\nu$ such that $\xi' \le \xi$ (so $|D_{\xi'}| \le |D_{\xi}|$). Hence,
\[
(\forall \nu < \sigma)(\exists \xi \in P_\nu)\; |C_\nu| \le |D_\xi|.
\]
Because of this, we may easily define an injection $\phi_1$ from $X_1 := \bigcup_{\nu < \sigma} C_\nu$ to $Y_1 := \bigcup_{\xi < \gamma} D_\xi$ such that
\[
(\forall x,x' \in X_1)\; x \ E \ x' \ \Leftrightarrow \ \phi_1(x) \ F \ \phi_1(x').
\]

If $\sigma = \alpha$ we are done, so assume $\sigma < \alpha$. Consider the sets 
\[
X_2 := \bigcup_{\sigma \le \nu < \alpha} C_\nu \quad \mbox{and} \quad Y_2 := \bigcup_{\gamma \le \xi < \beta} D_\xi.
\]
Let $E' := E \restriction X_2$ and $F' := F \restriction Y_2$. Since $|\beta - \gamma| < |\alpha|$, by the definition of $\sigma$ and the hypothesis that $\n_{\ge |C_\sigma|}(E) \le \n_{\ge |C_\sigma|}(F)$ we have that $|\alpha - \sigma| < |\alpha|$. That is, there are $< |\alpha|$ many $E'$-classes.
Also notice that $(\forall \kappa)\, \n_{\ge \kappa}(E') \le \n_{\ge \kappa}(F')$. We may now apply the inductive hypothesis to obtain an injective reduction $\phi_2$ from $E'\upharpoonright X_2$ to $F'\upharpoonright Y_2$. At this point we are finished, since 
\[
\phi\, :=\, \phi_1 \cup \phi_2
\]
 is an injective reduction from $E$ to $F$.
\end{proof}

\section{Counterexamples}

In this section we present some examples to show that the necessary conditions given in Theorem \ref{thm:main} for the existence of invariant and surjective reductions are not sufficient, and we argue that for these types of reducibility, no nice necessary and sufficient conditions exist.

\begin{example}\label{ex:1} Let $E$ and $F$ be equivalence relations each having exactly one equivalence class of size $n$ for each $1\leq n<\omega$ and no additional classes except that $E$ has exactly one class of size $\aleph_0$. Then for all cardinals $\kappa$ we have $\n_{\leq\kappa}(E)\leq\n_{\leq\kappa}(F)$ and $\n_{\geq\kappa}(E)\geq\n_{\geq\kappa}(F)$, but clearly there can be no invariant reduction from $E$ to $F$. \end{example}

To dispell the impression that finite cardinals are the sole source of the problem, we give another counterexample where this time $\n_\kappa(E)$ and $\n_\kappa(F)$ are either 0 or infinite for all $\kappa$. Our construction uses Fodor's Lemma, which is typical for the uncountable case of the matching problem (see, for instance, \cite[Lemma 4.9]{ANS}).

\begin{example}
There exist equivalence relations $E$ and $F$ such that
\begin{itemize}
\item[(1)] for all cardinals $\kappa$, $\n_\kappa(E)$ and $\n_\kappa(F)$ are either 0 or $\aleph_0$;
\item[(2)] $(\forall \kappa)\, \n_{\le \kappa}(E) = \n_{\le \kappa}(F)$;
\item[(3)] $(\forall \kappa)\, \n_{\ge \kappa}(E) = \n_{\ge \kappa}(F)$;
\item[(4)] $E\not\leq^iF$, and hence also $E\not\surj F$. 
\end{itemize} \label{ex:2}
\end{example}

\begin{proof}
It suffices to specify $\n_\kappa(E)$ and $\n_\kappa(F)$ for each cardinal $\kappa$. Let $\n_1(E) = \aleph_0$ and $\n_{\aleph_{\alpha}}(E) = \aleph_0$ for every limit ordinal $\alpha < \omega_1$, and let $\n_\kappa(E) = 0$ for every other cardinal $\kappa$. Let $\n_1(F) = \aleph_0$ and $\n_{\aleph_{\alpha+1}}(F) = \aleph_0$ for every limit ordinal $\alpha < \omega_1$, and let $\n_\kappa(F) = 0$ for every other cardinal $\kappa$.

It is clear that conditions (1) through (3) are satisfied. Suppose, towards a contradiction, that $\phi$ is an invariant reduction from $E$ to $F$. For every limit ordinal $\alpha < \omega_1$, $\phi$ maps each $E$-class of size $\aleph_{\alpha}$ onto an $F$-class of size $< \aleph_{\alpha}$. For each limit ordinal $\alpha < \omega_1$, arbitrarily pick some $E$-class $C_\alpha$ of size $\aleph_\alpha$. Hence, the function $\phi$ maps each class $C_\alpha$ onto some $F$-class of size $\aleph_{g(\alpha)}$ for some $g(\alpha) < \alpha$. We have now defined a regressive function $g$ from the (stationary) set of limit ordinals less than $\omega_1$ to $\omega_1$. By Fodor's Lemma, $g$ is constant on some stationary set. This means that there is some $\beta < \omega_1$ such that $\phi$ maps $\omega_1$ many $E$-classes onto $F$-classes of size $\aleph_\beta$. Since there are at most $\aleph_0$ many $F$-classes of size $\aleph_\beta$, we have a contradiction.
\end{proof}

Examples \ref{ex:1} and \ref{ex:2} suggest that in general there is no ``nice'' combinatorial characterization of the existence of an invariant or surjective reduction from one equivalence relation to another, and we now describe one way of making this precise. Define a \emph{nice condition} to be a conjunction of statements of the form ``for all cardinals $\kappa$, $a\mathrel{R}b$,'' where $a$ is one of the four terms
\[
\n_{\kappa}(E), \ \n_{\leq\kappa}(E), \ \n_{\geq\kappa}(E), \ |X/E|,
\]
$b$ is one of the four terms
\[
\n_{\kappa}(F), \ \n_{\leq\kappa}(F), \ \n_{\geq\kappa}(F), \ |Y/F|,
\]
and $R$ is one of the six relations
\[
\leq, \ \geq, \ =, \ \ne, \ <, \ >.
\]
The proof of the following proposition is tedious but not difficult, and we omit it.

\begin{proposition} Every nice condition which is implied by $E\leq^iF$ follows from the condition
\[
(\forall\kappa)\; \n_{\leq\kappa}(E) \, \leq \, \n_{\leq\kappa}(F),
\]
and every nice condition which is implied by $E\surj F$ follows from the condition 
\[
(\forall\kappa)\; [\,\n_{\leq\kappa}(E)\leq\n_{\leq\kappa}(F) \; \wedge \; \n_{\geq\kappa}(E)\geq\n_{\geq\kappa}(F)\,].
\]
\end{proposition}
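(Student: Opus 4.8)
The plan is to reduce the statement to a check on single conjuncts and then to separate the relations by explicit witnesses. Since a nice condition $\mathcal{N}=\bigwedge_i\mathcal{N}_i$ is implied by $E\leq^iF$ exactly when each conjunct $\mathcal{N}_i$ is, and $\mathcal{N}$ follows from $(\forall\kappa)\,\n_{\leq\kappa}(E)\leq\n_{\leq\kappa}(F)$ as soon as each $\mathcal{N}_i$ does, it suffices to treat a single clause of the form $(\forall\kappa)\,a\mathrel{R}b$. Call such a clause $(C)$-derivable if it follows from $(\forall\kappa)\,\n_{\leq\kappa}(E)\leq\n_{\leq\kappa}(F)$. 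What I would prove, for each single clause, is the contrapositive of the desired implication: a clause that is not $(C)$-derivable is not a consequence of $E\leq^iF$. Because every pair admitting an invariant reduction already satisfies $(C)$ (Theorem \ref{thm:main}(5)), this reduces to the following separation task: whenever some $(C)$-satisfying pair falsifies the clause, produce a pair admitting an \emph{actual} invariant reduction that also falsifies it.

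The truth value of any clause depends only on the two size spectra $\kappa\mapsto\n_\kappa(E)$ and $\kappa\mapsto\n_\kappa(F)$, since each of $\n_\kappa$, $\n_{\leq\kappa}$, $\n_{\geq\kappa}$, $|X/E|$ is computed from them. I would therefore work directly with spectra, drawing all separating witnesses from a small toolkit of pairs whose invariant reducibility is transparent: (i) isomorphic pairs $F\cong E$, where every $E$-invariant equals its $F$-counterpart; (ii) pointwise-dominating pairs, where $\n_\kappa(F)\geq\n_\kappa(E)$ for all $\kappa$ and one matches each $E$-class bijectively to a distinct $F$-class of equal size, leaving the surplus $F$-classes outside the (necessarily invariant) range; and (iii) pairs built by surjecting a single large $E$-class onto a strictly smaller $F$-class, as in mapping one class of size $\aleph_1$ onto one of size $\aleph_0$. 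The point of confining attention to these is that their matchability is evident and does not invoke the delicate infinitary matching phenomenon underlying Theorem \ref{thm:main}(2).

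With the toolkit fixed, the remaining argument is a finite case analysis over the $4\times4\times6$ clauses. The relations $<,>,\ne$ are eliminated at once by the isomorphic witness consisting of a single class of size $1$ on each side: there all four $E$-terms and all four $F$-terms equal $1$ at $\kappa=1$, so every clause using $<$, $>$, or $\ne$ is false while $(C)$ (indeed $E\cong F$) holds, marking each such clause simultaneously as non-$(C)$-derivable and as a non-consequence of $\leq^i$. For $R\in\{\leq,\geq,=\}$ I would first record the clauses that genuinely are $(C)$-derivable, namely $\n_{\leq\kappa}(E)\leq\n_{\leq\kappa}(F)$, the induced $|X/E|\leq|Y/F|$ obtained as the supremum over $\kappa$, and their immediate weakenings; for every remaining clause I would exhibit a failure from the toolkit. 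Pointwise domination (with surplus classes added) yields $\n_{\geq\kappa}(F)>\n_{\geq\kappa}(E)$ and $\n_\kappa(F)>\n_\kappa(E)$, while the single-large-class construction yields $\n_{\geq\kappa}(E)>\n_{\geq\kappa}(F)$, so no fixed-direction comparison of the $\n_{\geq\kappa}$ or $\n_\kappa$ terms, and no inequality in the wrong direction against $|Y/F|$, can survive. The argument for $E\surj F$ is entirely parallel: one replaces $(C)$ by $(\forall\kappa)[\,\n_{\leq\kappa}(E)\leq\n_{\leq\kappa}(F)\wedge\n_{\geq\kappa}(E)\geq\n_{\geq\kappa}(F)\,]$, from which $|X/E|=|Y/F|$ is derivable, and replaces the toolkit with size-nonincreasing bijective matchings of classes together with $F\cong E$, these being the transparently surjective reductions.

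The main obstacle is not conceptual but lies in the bookkeeping of this case analysis and, more pointedly, in the need to guarantee that each witness spectrum is realized by a genuine invariant (respectively surjective) reduction. Since $(C)$ is necessary but not sufficient for $E\leq^iF$ (Example \ref{ex:1}), it would be a fatal circularity to justify a witness merely by verifying $(C)$; every witness must instead come from a family whose matchability is exhibited outright, which is exactly why I restrict to pointwise domination and explicit single-class surjections. Confirming that this restricted toolkit still breaks every non-$(C)$-derivable clause, in particular the mixed-type clauses comparing an $\n_{\leq\kappa}$ term on one side with an $\n_{\geq\kappa}$ or $\n_\kappa$ term on the other, is the tedious core of the proof and the part most prone to oversight.
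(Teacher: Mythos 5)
There is nothing in the paper to compare your proof against: the authors explicitly omit the argument, calling it ``tedious but not difficult.'' So the only question is whether your outline is itself correct, and it is --- indeed it is surely the argument the authors had in mind. Your two structural moves are both valid: the reduction to single conjuncts (a conjunction is implied by $E\leq^iF$ exactly when each conjunct is, and follows from the necessary condition as soon as each conjunct does), and the insistence that every separating witness carry an \emph{explicitly exhibited} invariant (resp.\ surjective) reduction rather than merely satisfying the necessary condition --- this is precisely where a sloppy version of the proof would be circular, in light of Examples \ref{ex:1} and \ref{ex:2}. I also checked that your toolkit closes every case. The one-class isomorphic pair kills all clauses with $<$, $>$, $\ne$ at $\kappa=1$. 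For $\leq^i$, the only implied one-directional clauses are $(\forall\kappa)\,\n_{\leq\kappa}(E)\leq\n_{\leq\kappa}(F)$ and its weakenings (replacing $\n_{\leq\kappa}(E)$ by $\n_{\kappa}(E)$, or $\n_{\leq\kappa}(F)$ by $|Y/F|$, in which case any $E$-term works), and these all follow from the displayed condition; every clause of the form ``$E$-term $\geq$ $F$-term'' fails at $\kappa=1$ for your domination pair ($E$ one singleton class, $F$ two singleton classes); the clauses $\n_\kappa(E)\leq\n_\kappa(F)$, $\n_\kappa(E)\leq\n_{\geq\kappa}(F)$, $\n_{\geq\kappa}(E)\leq\n_{\geq\kappa}(F)$ fail for your single-class surjection ($E$ one class of size $2$ onto $F$ one class of size $1$); the remaining mixed clauses such as $\n_{\leq\kappa}(E)\leq\n_{\kappa}(F)$ or $\n_{\geq\kappa}(E)\leq\n_{\leq\kappa}(F)$ fail already for an isomorphic pair with one class of size $1$ and one of size $2$; and the $=$ clauses reduce to their two one-directional halves. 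The $\surj$ half is parallel, with $|X/E|=|Y/F|$, $\n_{\geq\kappa}(E)\geq\n_{\geq\kappa}(F)$ and their weakenings joining the derivable list, and your size-nonincreasing bijections (e.g.\ $E$ with classes of sizes $2,1$ onto $F$ with two classes of size $1$) supplying the witnesses that domination pairs can no longer provide since they violate $|X/E|=|Y/F|$. So your plan is not merely plausible; carried out, it proves the proposition.
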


\noindent In this sense parts (3) and (5) of Theorem \ref{thm:main} are optimal, and Examples \ref{ex:1} and \ref{ex:2} show that none of the relations $E\surj F$, $E\leq^iF$, and $E\sqsubseteq^fF$ can be characterized by a nice condition.

\section{Completeness of the Diagrams}

In this final section we prove Proposition \ref{prop:complete}.

\medskip

\noindent \emph{Proof that the diagram in Figure \ref{FIG1} is correct and complete}. 
All displayed implications follow immediately from the definitions, so we need only show that there are no additional implications. We will show that for every node $A$ in the diagram, there is no implication of the form $A\Rightarrow B$ that is not displayed. For the top node $E\cong F$ this is vacuous. By symmetry, it will suffice to consider the seven nodes on the left half of the diagram. We will accomplish this using the following seven pairs of equivalence relations, which are described as follows: $\langle n_1,\ldots,n_m\rangle$ denotes the equivalence relation having for each $1\leq k\leq m$ exactly $n_k$ equivalence classes of size $k$ and no others.

\begin{enumerate}
 \item $E=\langle 1\rangle$, $F=\langle 2\rangle$;
 \item $E=\langle 1\rangle$, $F=\langle 0,1\rangle$;
 \item $E=\langle 0,1\rangle$, $F=\langle 1\rangle$;
 \item $E=\langle 1\rangle$, $F=\langle 0,2\rangle$;
 \item $E=\langle 0,1\rangle$, $F=\langle 2\rangle$;
 \item $E=\langle 1,0,1\rangle$, $F=\langle 0,2\rangle$;
 \item $E=\langle 1,0,1\rangle$, $F=\langle 0,3\rangle$.
\end{enumerate}

\noindent (1) shows that $E\sqsubseteq^iF$ does not imply $F\leq E$. (2) shows that $E\sqsubseteq^fF$ implies neither $E\leq^iF$ nor $F\sqsubseteq E$. (3) shows that $E\surj F$ implies neither $E\sqsubseteq F$ nor $F\leq^iE$. (4) shows that $E\sqsubseteq F$ implies neither $E\leq^iF$ nor $F\leq E$. (5) shows that $E\leq^iF$ implies neither $E\sqsubseteq F$ nor $F\leq E$. (6) shows that $E\leq^fF$ implies none of $E\sqsubseteq F$, $E\leq^iF$, $F\sqsubseteq E$, and $F\leq^iE$. Finally, (7) shows that $E\leq F$ implies none of $E\sqsubseteq F$, $E\leq^iF$, and $F\leq E$. These observations suffice to establish the completeness of the diagram in Figure \ref{FIG1}.

\medskip

\noindent \emph{Proof that the diagram in Figure \ref{FIG2} is correct and complete}.
That $E\sim F\Rightarrow E\sim^fF$ is clear, and the fact that $E\approx^iF\Rightarrow E\cong F$ is well-known and follows from the standard Schr\"{o}der-Bernstein argument.  The remaining displayed implications follow immediately from the implications in Figure 1, so it is only left to show that there are no additional implications. For this it suffices to show the following:
\[
\begin{array}{llll}
(1) & E\bisurj F & \not\Rightarrow & E\cong F; \\
(2) & E\approx F & \not\Rightarrow & E\sim^iF; \\
(3) & E\sim^i F & \not\Rightarrow & E\approx F; \\
(4) & E\sim F & \not\Rightarrow & E\sim^iF; \\
(5) & E\sim F & \not\Rightarrow & E\approx F.
\end{array}
\]
This may be done using the following equivalence relations, which have no classes other than those described.
\begin{enumerate}
 \item $E$ has one class of size $n$ for each even integer $n$, $F$ has one class of size $n$ for each odd integer $n\geq 3$, and both $E$ and $F$ have $\aleph_0$ many classes of size 1.
 \item $E$ has $\aleph_0$ many classes of size $\aleph_0$ and one class of size 1; $F$ has $\aleph_0$ many classes of size $\aleph_0$ and one class of size 2.
 \item Both $E$ and $F$ have $\aleph_0$ many classes of size 1, and $E$ has one class of size 2.
 \item $E$ has one class of size 1, $F$ has one class of size 2.
 \item Same as (4).
\end{enumerate}

\thebibliography{99}

\bibitem[ANS]{ANS}
R.\ Aharoni, C.\ St.\ J.\ A.\ Nash-Williams, and S.\ Shelah,
\textit{A general criterion for the existence of transversals},
Proceedings of the London Mathematical Society (3) 47 (1983), 43--68.

\end{document}